\documentclass{article}

\usepackage[T1]{fontenc}
\usepackage[utf8]{inputenc}

\usepackage{subcaption}

\usepackage{adjustbox}

\usepackage{dsfont} 
\usepackage{amssymb}

\usepackage{amsmath}
\usepackage{amsthm}
\newtheorem{theorem}{Theorem}
\newtheorem{lemma}[theorem]{Lemma}

\newtheorem{proposition}[theorem]{Proposition}
\newtheorem{conjecture}[theorem]{Conjecture}

\usepackage[nocompress]{cite}

\usepackage{tikz}
\usetikzlibrary{
    calc,
    positioning,
    backgrounds,
}
\usepackage{pgfplots}
\pgfplotsset{compat=1.14}

\pgfplotsset{
    closed dot/.style = {
        only marks,
        mark = *,
        mark size = 1.5pt,
    },
    open dot/.style = {
        only marks,
        mark = *,
        fill = white,
        mark size = 1.5pt,
    },
}

\newcommand{\norm}[1]{\left\lVert #1 \right\rVert}
\DeclareMathOperator{\vol}{vol} 
\DeclareMathOperator{\area}{area} 
\DeclareMathOperator{\ppyr}{ppyr} 

\usepackage[hidelinks]{hyperref}

\makeatletter
\newcommand{\restate@theorem@name}{}
\newtheorem*{restate@theorem}{\restate@theorem@name}
\newenvironment{restatetheorem}[1]{
    \renewcommand{\restate@theorem@name}{Theorem~\ref{#1}}
    \begin{restate@theorem}
}{
    \end{restate@theorem}
}

\begin{document}

\title{
    Reconstruction of symmetric convex bodies \\
    from Ehrhart-like data
}
\author{
    Tiago Royer
}
\date{}
\maketitle

\begin{abstract}
    In a previous paper~\cite{Eu},
    we showed how to use the Ehrhart function $L_P(s)$,
    defined by $L_P(s) = \#(sP \cap \mathbb Z^d)$,
    to reconstruct a polytope $P$.
    More specifically,
    we showed that,
    for rational polytopes $P$ and $Q$,
    if $L_{P + w}(s) = L_{Q + w}(s)$ for all integer vectors $w$,
    then $P = Q$.
    In this paper we show the same result,
    but assuming that $P$ and $Q$ are symmetric convex bodies
    instead of rational polytopes.
\end{abstract}

\section{Introduction}

Given a polytope $P \subseteq \mathbb R^d$,
the classical Ehrhart lattice point enumerator $L_P(t)$ is defined as
\begin{equation*}
    L_P(t) = \#(tP \cap \mathbb Z^d), \qquad \text{integer $t \geq 0$.}
\end{equation*}
Here,
$\#(A)$ is the number of elements in $A$
and $tP = \{tx \mid x \in P\}$ is the dilation of $P$ by $t$.
The above definition may be extended
to allow for $P$ to be an arbitrary convex body,
and for $t$ to be an arbitrary real number.

To minimize confusion,
we will denote real dilation parameters with the letter $s$,
so that $L_P(t)$ denotes the classical Ehrhart function
and $L_P(s)$ denotes the extension considered in this paper.
So,
for example,
$L_P(t)$ is just the restriction of $L_P(s)$ to integer values.

It is clear from the definition that
the classical Ehrhart function is invariant under integer translations;
that is,
for every polytope $P$ and every integer vector $w$,
we have
\begin{equation*}
    L_{P + w}(t) = L_P(t)
\end{equation*}
for all $t$.
This is not true for the real Ehrhart function $L_P(s)$.
In fact,
an earlier paper~\cite{Eu}
shows that the list of functions $L_{P + w}(s)$,
for integer $w$,
is a complete set of invariants for full-dimensional rational polytopes.
More precisely:

\begin{theorem}
    \label{thm:rational-reconstruction}
    Let $P$ and $Q$ be full-dimensional rational polytopes,
    and suppose that $L_{P + w}(s) = L_{Q + w}(s)$
    for all $s > 0$ and all integer $w$.
    Then $P = Q$.
\end{theorem}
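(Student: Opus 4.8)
The plan is to reduce the theorem to the set-theoretic assertion that $P$ and $Q$ contain the same rational points. This suffices: a full-dimensional closed convex set equals the closure of its rational points (the interior is dense in the body, and the rationals are dense in the interior), so $P \cap \mathbb Q^d = Q \cap \mathbb Q^d$ forces $P = Q$. It is therefore enough to prove, for every $x_0 \in \mathbb Q^d$, that $x_0 \in P$ if and only if $x_0 \in Q$, and for this I would use the hypothesis only at rational dilation parameters.

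The crucial step is to reinterpret $L_{P+w}(s)$ at $s = p/q$ (in lowest terms) as a sublattice count. Rewriting $n \in \tfrac pq(P+w)$ as $qn \in pP + pw$ and setting $m = qn$ gives
\begin{equation*}
    L_{P+w}(p/q) = \#\bigl( (pP) \cap (q\mathbb Z^d - pw) \bigr),
\end{equation*}
so each value counts the points of $pP$ lying in a single coset of the sublattice $q\mathbb Z^d$. Since $\gcd(p,q) = 1$, the class of $-pw$ in $\mathbb Z^d/q\mathbb Z^d$ runs over every residue as $w$ ranges over $\mathbb Z^d$; hence for any prescribed $r \in \mathbb Z^d$ I can choose $w$ with $q\mathbb Z^d - pw = r + q\mathbb Z^d$, and the hypothesis yields $\#\bigl((r+q\mathbb Z^d)\cap pP\bigr) = \#\bigl((r+q\mathbb Z^d)\cap pQ\bigr)$.

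To isolate a single point, I would fix $x_0 = r/p$ and let $q$ grow through integers coprime to $p$. Once $q$ exceeds the diameter of $pP \cup \{r\}$ and of $pQ \cup \{r\}$, every coset point $r + qv$ with $v \neq 0$ lies outside both bodies, so each side of the last equality collapses to the indicator of $r \in pP$, respectively $r \in pQ$ (value $1$ if true, $0$ otherwise); that is, to the indicator of $x_0 \in P$, respectively $x_0 \in Q$. Equality of these indicators is exactly the membership equivalence sought. I expect the main obstacle to be discovering this reformulation at all: one must recognize that an integer translate combined with a non-integer dilation encodes an arbitrary coset of an arbitrary sublattice $q\mathbb Z^d$, whereas integer dilations merely reproduce the translation-invariant classical enumerator and reveal no positional information. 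It is worth noting that the argument uses neither rationality nor the polytope structure of $P$ and $Q$ beyond boundedness and full-dimensionality, which suggests that the same strategy should underpin the convex-body version.
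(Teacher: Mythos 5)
Your argument is correct, and it is a genuinely different route from anything in this paper or in~\cite{Eu}. The paper never reproves Theorem~\ref{thm:rational-reconstruction} here (it is imported from~\cite{Eu}), and the machinery it does develop is analytic--geometric: Ehrhart data determine pseudopyramid volumes (Lemma~\ref{thm:volume-of-pseudopyramid}), translated pseudopyramids behave asymptotically like pyramids over orthogonal projections (Theorem~\ref{thm:spherical-approaches-orthogonal} and Lemma~\ref{thm:volume-projection-from-volume-ppyr}), and Aleksandrov's projection theorem then reconstructs a symmetric body from its projection areas. You replace all of this by one exact identity and two elementary observations: writing $s = p/q$ with $\gcd(p,q)=1$, the bijection $n \mapsto qn - pw$ gives
\begin{equation*}
    L_{P+w}(p/q) = \#\bigl(pP \cap (q\mathbb Z^d - pw)\bigr),
\end{equation*}
invertibility of $p$ modulo $q$ lets the translation $w$ select an arbitrary coset $r + q\mathbb Z^d$, and choosing $q$ coprime to $p$ and larger than the diameter of $pP \cup pQ \cup \{r\}$ collapses both counts to the indicators of $r \in pP$ and $r \in pQ$. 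Hence $P$ and $Q$ contain exactly the same rational points, and a full-dimensional compact convex set is the closure of its rational points, so $P = Q$. Every step checks out. What is notable is how much more your proof yields: it nowhere uses rationality, polytopality, or symmetry, so it establishes not only Theorem~\ref{thm:rational-reconstruction} but also Theorem~\ref{thm:convex-body-translation-variant} and the conjecture in the final section, for arbitrary convex bodies; it also invokes the hypothesis only at rational dilations, so the assumption can be weakened to rational $s > 0$. What the paper's route buys instead is a geometric dictionary (Ehrhart data $\to$ pseudopyramid volumes $\to$ projection areas) connecting Ehrhart theory to geometric tomography, which has independent interest but is strictly weaker as a reconstruction tool, since reliance on Aleksandrov's theorem is precisely what forces the symmetry hypothesis. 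Since your argument appears to settle the concluding conjecture, it deserves a careful write-up; in particular, state explicitly where full-dimensionality enters (a lower-dimensional body can miss $\mathbb Q^d$ entirely, so the closure step genuinely needs it).
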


It is also conjectured that
the rationality hypothesys may be dropped.
In this paper,
we will show a special case of this conjecture,
where we assume that the polytopes are symmetric
(that is, $x \in P$ if and only if $-x \in P$).
In fact,
we never use the fact that $P$ and $Q$ are polytopes;
we just need convexity.
So we have the following.

\begin{theorem}
    \label{thm:convex-body-translation-variant}
    Let $K$ and $H$ be symmetric convex bodies,
    and assume that $L_{K + w}(s) = L_{H + w}(s)$
    for all real $s > 0$ and all integer $w$.
    Then $K = H$.
\end{theorem}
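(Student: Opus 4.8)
The plan is to read off the indicator function $\mathbf 1_K$ pointwise from the data and then let convexity finish the job. Writing the hypothesis in the form
\[
 L_{K+w}(s)=\#\{m\in\mathbb Z^d: m/s-w\in K\},
\]
I regard $s(K+w)$ as a scaled copy of $K$ of diameter $s\cdot\operatorname{diam}K$. The key idea is that by taking $s$ close to $1/n$ for a large integer $n$ and simultaneously pushing the integer translate $w$ off to infinity in a controlled way, one can shrink $s(K+w)$ to a blob so small that it meets $\mathbb Z^d$ in at most one point, while arranging that whether it captures that point encodes membership of a prescribed test point $p$. Since $K$ and $H$ are symmetric they lie in a common ball $B(0,\rho)$, so it suffices to decide, for every $p\in B(0,\rho)$ off the two boundaries, whether $p\in K$.

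Concretely, fix $p\in B(0,\rho)$ with $p\notin\partial K$, choose an integer $n>2\rho$, and write $p=a+\sigma$ with $a\in\mathbb Z^d$ nearest to $p$ and $\sigma=p-a$. For large $N$ put $s_N=(N-1)/(nN)$, let $\mu_N\in\mathbb Z^d$ be nearest to $(N-1)\sigma/n$, and set $w_N=n\mu_N-a$. A direct computation gives
\[
 \frac{m}{s_N}-w_N = n(m-\mu_N)\,\frac{N}{N-1}+\frac{n\mu_N}{N-1}+a,
\]
so the term coming from $m=\mu_N$ tends to $p$, whereas for $m\neq\mu_N$ the expression is displaced from it by a vector of norm exceeding $2\rho$ and therefore lands outside $B(0,\rho)\supseteq K$. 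Hence for all large $N$ the body $s_N(K+w_N)$ contains a lattice point if and only if $p\in K$, i.e. $L_{K+w_N}(s_N)=\mathbf 1[p\in K]$. Running the same recipe for $H$ and using $L_{K+w_N}(s_N)=L_{H+w_N}(s_N)$ then yields $\mathbf 1[p\in K]=\mathbf 1[p\in H]$.

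This determines $K$ and $H$ off the null set $\partial K\cup\partial H$; since each body equals the closure of its interior, $\operatorname{int}K=\operatorname{int}H$ and hence $K=H$. The step I expect to be most delicate is the isolation claim: one must verify, uniformly in $N$, that $s_N(K+w_N)$ really captures only the intended lattice point, which is exactly where boundedness of $K$ (through the choice $n>2\rho$) and the precise rate $s_N\to 1/n$ enter; symmetry is used only lightly, to place $K$ and $H$ in a ball about the origin. The other point requiring care is the excluded boundary: for $p\in\partial K$ the count need not stabilize, so one recovers these points by approximating them from inside and outside by generic $p$, using convexity to guarantee that such approximants exist and pin down $\partial K$. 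I expect the displayed algebra to be routine and these limiting and measure-theoretic details to be the real content.
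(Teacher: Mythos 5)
Your construction is correct, and it takes a genuinely different route from the paper --- in fact a strictly stronger one. The paper never looks at individual lattice points: it converts the hypothesis into equality of pseudopyramid volumes $\vol \ppyr(K+w) = \vol \ppyr(H+w)$ (Lemma~\ref{thm:volume-of-pseudopyramid}), proves that $\vol \ppyr(K+\mu v)/\mu \to V_K(v)/d$ as $\mu \to \infty$ (Lemma~\ref{thm:volume-projection-from-volume-ppyr}, via spherical projections and Hausdorff convergence), and then invokes Aleksandrov's projection theorem --- which is exactly where symmetry is indispensable. You instead read off $\mathbf{1}_K$ pointwise. I verified the key steps: your displayed identity holds (both sides equal $\frac{nN}{N-1}m - n\mu_N + a$); the candidate point $c_N = \frac{n\mu_N}{N-1} + a$ tends to $p$ since $\mu_N$ differs from $(N-1)\sigma/n$ by at most $\sqrt d/2$ in norm; and since the lattice $\frac{1}{s_N}\mathbb Z^d - w_N$ has minimum distance $\frac{nN}{N-1} > n$, every candidate with $m \neq \mu_N$ has norm at least $n - \norm{c_N}$, which exceeds $\rho$ for large $N$ because $n - 2\rho$ is a fixed positive gap and $\norm{c_N} \to \norm{p} \leq \rho$. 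Hence $L_{K+w_N}(s_N) = \mathbf{1}[c_N \in K]$ exactly, and for $p \notin \partial K \cup \partial H$ the hypothesis forces $\mathbf{1}[p \in K] = \mathbf{1}[p \in H]$. Your endgame also goes through, and can be made purely topological (no measure theory or inside/outside approximation needed): any $p \in \operatorname{int} K$ either avoids both boundaries, hence lies in $H$, or lies in $\partial H \subseteq H$; thus $\operatorname{int} K \subseteq H$ and $K = \overline{\operatorname{int} K} \subseteq H$, and symmetrically $H \subseteq K$. What deserves emphasis is what each approach buys: the paper's route connects the problem to classical convex geometry, but symmetry is structurally unavoidable there because Aleksandrov's theorem fails without it; your route uses symmetry and convexity only through ``$K$ and $H$ are compact and equal the closure of their interiors,'' so it proves not merely Theorem~\ref{thm:convex-body-translation-variant} but the paper's concluding conjecture for arbitrary convex bodies. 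Given that the conjecture is presented as open, this is a result worth writing up carefully on its own, with the two quantitative points made explicit: the lower bound $n - \norm{c_N} > \rho$ valid for all large $N$, and the exact statement ``$L_{K+w_N}(s_N) = \mathbf{1}[c_N \in K]$ for every compact $K \subseteq B_\rho(0)$.''
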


\section{Notation and basic results}

The punchline is Alexsandrov's projection theorem.
Let $K \subseteq \mathbb R^d$ be a convex body
(that is,
a covex, compact set with nonempty interior).
For any unit vector $v$,
we will denote by $V_K(v)$
the $(d-1)$-dimensional area of the orthogonal projection of $K$ in $\{v\}^\perp$.

For example,
let $K = [0, 1] \times [0, 1] \subseteq \mathbb R^2$,
$v = (0, 1)$ and $v' = (\frac{\sqrt 2}{2}, \frac{\sqrt 2}{2})$.
Then $V_K(v) = 1$ and $V_K(v') = \sqrt 2$.

A convex body $K$ is said to be \emph{symmetric}
if $x \in K$ if and only if $-x \in K$.
An importan reconstruction theorem for symmetric convex bodies
is Aleksandrov's projecion theorem
(see e.g.~\cite[p.~115]{GeometricTomography}).

\begin{theorem}[Aleksandrov's projection theorem]
    Let $K$ and $H$ be two symmetric convex bodies in $\mathbb R^d$
    such that $V_K(v) = V_H(v)$ for all unit vectors $v$.
    Then $K = H$.
\end{theorem}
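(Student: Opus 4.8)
The plan is to pass from the projection volumes $V_K(v)$ to the \emph{surface area measure} $S_K$ of $K$ --- the finite Borel measure on the unit sphere $S^{d-1}$ whose value on a set $\omega$ records the $(d-1)$-dimensional area of the part of $\partial K$ whose outer normals lie in $\omega$ --- and then to invoke the uniqueness part of Minkowski's theorem. The bridge between the two is Cauchy's projection formula, which expresses the projection volume as a cosine transform of the surface area measure:
\begin{equation*}
    V_K(v) = \frac{1}{2} \int_{S^{d-1}} |\langle u, v \rangle| \, dS_K(u).
\end{equation*}
Thus the hypothesis $V_K(v) = V_H(v)$ for all unit $v$ says precisely that the cosine transforms of $S_K$ and $S_H$ coincide.

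The second step is to show that the cosine transform is injective on the measures that arise here. Because $K$ is symmetric, a point $x \in \partial K$ with outer normal $u$ is matched by the point $-x \in \partial K$ with outer normal $-u$, and the local geometry matches as well; hence $S_K$ is an \emph{even} measure (invariant under $u \mapsto -u$), and likewise for $S_H$. The cosine transform is a convolution operator on $S^{d-1}$ with the even kernel $u \mapsto |\langle u, v\rangle|$, so by the Funk--Hecke theorem it acts diagonally on spherical harmonics: a harmonic $Y_n$ of degree $n$ is multiplied by a scalar $\lambda_n$. A parity argument gives $\lambda_n = 0$ for odd $n$ (even kernel against odd harmonic), while $\lambda_n \neq 0$ for every even $n$. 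Consequently the cosine transform kills exactly the odd part of a measure and is injective on even measures. Applying this to the even signed measure $S_K - S_H$, whose cosine transform vanishes, yields $S_K = S_H$.

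Finally I would invoke Minkowski's uniqueness theorem: a convex body is determined by its surface area measure up to translation, so $K = H + t$ for some vector $t$. The central symmetry of both bodies then forces $t = 0$: from $-K = K$ and $-H = H$ we get $K = -K = -(H+t) = H - t$, so $H + t = H - t$, meaning $H$ is invariant under translation by $-2t$; since $H$ is bounded this is possible only if $t = 0$, and therefore $K = H$.

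The main obstacle is the injectivity of the cosine transform on even measures, which is the analytic heart of the argument. Cauchy's formula is bookkeeping and Minkowski uniqueness is a standard black box, but establishing that the even Funk--Hecke multipliers $\lambda_n$ never vanish requires the spherical-harmonic machinery together with a genuine evaluation of the eigenvalue integrals.
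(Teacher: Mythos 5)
The paper never proves this theorem: it imports it as a known black box, citing Gardner's \emph{Geometric Tomography} (p.~115), and all of the paper's own work goes into manufacturing the hypothesis $V_K = V_H$ from Ehrhart data. So there is no in-paper proof to match yours against; what you have written is, in substance, the classical proof of Aleksandrov's projection theorem --- essentially the one found in the very reference the paper cites. Your outline is correct and the three-step architecture is the standard one: Cauchy's projection formula converts equality of brightness functions into equality of the cosine transforms of the surface area measures; injectivity of the cosine transform on even measures gives $S_K = S_H$; Minkowski's uniqueness theorem gives $K = H + t$, and your symmetry argument eliminating $t$ (from $H + t = H - t$ and boundedness) is clean and complete. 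Two points deserve explicit attention if you flesh this out. First, in the Funk--Hecke step you apply a multiplier statement for \emph{functions} to a signed \emph{measure}; the standard fix is self-adjointness of the cosine transform, pairing the vanishing transform of $\mu = S_K - S_H$ against each spherical harmonic to get $\lambda_n \int Y_n \, d\mu = 0$, whence all even moments vanish, the odd moments vanishing by evenness of $\mu$, and density of harmonics then forces $\mu = 0$. Second, Minkowski uniqueness requires the bodies to be full-dimensional, which is guaranteed here by the paper's definition of convex body (nonempty interior). You rightly flag the nonvanishing of the even multipliers $\lambda_n$ as the analytic heart; these have a known closed form (alternating in sign, decaying, and nonzero for every even $n$), so that gap is a genuine but standard computation, not a defect in the approach.
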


So,
the goal is to compute the function $V_K$ using the Ehrhart functions $L_{K + w}$.
The two main tools are the Hausdorff distance and pseudopyramids.

For $\lambda \geq 0$ and $x \in \mathbb R^d$,
denote by $B_\lambda(x)$ the ball with radius $\lambda$ centered at $x$.
If $K$ is a convex body and $\lambda \geq 0$,
define $K_\lambda$ by
\begin{equation*}
    K_\lambda = \bigcup_{x \in K} B_\lambda(x).
\end{equation*}

The Hausdorff distance $\rho(K, H)$ between two convex bodies $K$ and $H$
is defined to be (Figure~\ref{fig:hausdorff-distance})
\begin{equation*}
    \rho(K, H) = \inf \{ \lambda \geq 0 \mid
        K \subseteq H_\lambda \text{ and } H \subseteq K_\lambda
    \}.
\end{equation*}

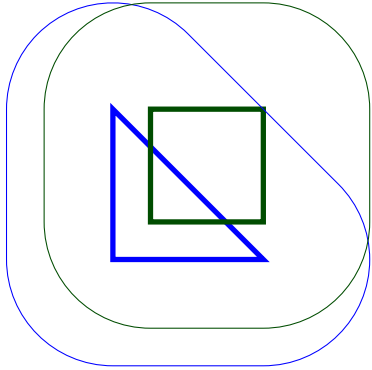
\begin{figure}[t]
    \centering
    \begin{tikzpicture}
        \draw[line width = 2pt, blue] (0, 0) -- (0, 2) -- (2, 0) -- cycle;
        \draw[line width = 2pt, green!30!black]
            (0.5, 0.5) -- (2, 0.5) -- (2, 2) -- (0.5, 2) -- cycle;

        \pgfmathsetmacro{\rho}{sqrt(2)}
        \draw[thin, blue]
            (-\rho, 0) arc [radius = \rho, start angle = 180, end angle = 270]
            -- ++(2, 0) arc [radius = \rho, start angle = -90, end angle = 45]
            -- ++(-2, 2) arc [radius = \rho, start angle = 45, end angle = 180]
            -- cycle;

        \draw[thin, green!30!black] (0.5, 0.5)
            ++(-\rho, 0) arc [radius = \rho, start angle = 180, end angle = 270]
            -- ++(1.5, 0) arc [radius = \rho, start angle = -90, end angle = 0]
            -- ++(0, 1.5) arc [radius = \rho, start angle = 0, end angle = 90]
            -- ++(-1.5, 0) arc [radius = \rho, start angle = 90, end angle = 180]
            -- cycle;
    \end{tikzpicture}
    \caption[
        Hausdorff distance between two convex sets.
    ]{
        Hausdorff distance between two convex sets.
        The thick lines are the boundaries of the sets $K$ and $H$;
        the thin lines are the boundaries of the sets
        $K_\lambda$ and $H_\lambda$.
    }
    \label{fig:hausdorff-distance}
\end{figure}

It can be shown that
the set of convex sets in $\mathbb R^d$
is a metric space under the Hausdorff distance
and that the Euclidean volume is continuous in this space
(see e.g.~\cite[p.~9]{GeometricTomography}),
but we just need the following special case of this theory.

\begin{lemma}
    \label{thm:hausdorff-distance}
    Let $K$ and $A_1, A_2, \dots$ be convex bodies.
    If $\lim_{i \to \infty} \rho(K, A_i) = 0$,
    then $\lim_{i \to \infty} \vol A_i = \vol K$.
\end{lemma}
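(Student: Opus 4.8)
The plan is to trap $\vol A_i$ between the volumes of an inner and an outer parallel body of $K$, both of which converge to $\vol K$ as their radius shrinks to $0$. Alongside the outer parallel body $K_\lambda$ already defined in the text, I would introduce the \emph{inner parallel body} $K^{-\lambda} = \{x \in \mathbb R^d \mid B_\lambda(x) \subseteq K\}$, that is, the set of centres of radius-$\lambda$ balls that fit inside $K$. Writing $\lambda_i$ for a value slightly larger than $\rho(K, A_i)$ (so that the defining inclusions of the Hausdorff distance hold, using that the relevant sets are closed, and noting $\lambda_i \to 0$), the first goal is the sandwich $K^{-\lambda_i} \subseteq A_i \subseteq K_{\lambda_i}$.

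The right-hand inclusion is immediate, since $A_i \subseteq K_{\lambda_i}$ is part of the definition of $\rho$. For the left-hand inclusion I would take $x \in K^{-\lambda_i}$ and suppose toward a contradiction that $x \notin A_i$. As $A_i$ is a closed convex set, let $p$ be its nearest point to $x$ and set $u = (x - p)/\norm{x - p}$ and $\delta = \norm{x - p} > 0$. The supporting hyperplane of $A_i$ at $p$ shows that $\langle a - p, u\rangle \leq 0$ for all $a \in A_i$, so the point $y = x + \lambda_i u$ satisfies $\langle y - a, u\rangle \geq \delta + \lambda_i$, whence its Euclidean distance $d(y, A_i)$ to $A_i$ is at least $\delta + \lambda_i > \lambda_i$. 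But $y \in B_{\lambda_i}(x) \subseteq K \subseteq (A_i)_{\lambda_i}$ forces $d(y, A_i) \leq \lambda_i$, a contradiction. Hence $K^{-\lambda_i} \subseteq A_i$, and the sandwich gives $\vol(K^{-\lambda_i}) \leq \vol A_i \leq \vol(K_{\lambda_i})$.

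It then remains to establish the two one-sided limits $\vol(K_\lambda) \to \vol K$ and $\vol(K^{-\lambda}) \to \vol K$ as $\lambda \to 0^+$; squeezing with $\lambda_i \to 0$ then finishes the proof. Both limits follow from monotone set-convergence rather than from Steiner's formula. The bodies $K_\lambda$ decrease to $\bigcap_{\lambda > 0} K_\lambda = K$ (because $K$ is closed) and are all contained in the finite-volume set $K_1$, so continuity of measure from above yields $\vol(K_\lambda) \to \vol K$. Dually, the bodies $K^{-\lambda}$ increase to $\bigcup_{\lambda > 0} K^{-\lambda} = \operatorname{int} K$ (a point admits a small ball inside $K$ exactly when it is interior), so monotone convergence gives $\vol(K^{-\lambda}) \to \vol(\operatorname{int} K) = \vol K$, the last equality because the boundary of a convex body is Lebesgue-null.

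I expect the main obstacle to be the left inclusion $K^{-\lambda_i} \subseteq A_i$: the outer estimate is essentially definitional, whereas this inner estimate is where convexity of $A_i$ genuinely enters, through the nearest-point projection and its supporting hyperplane. A secondary technical point is handling the infimum in the definition of $\rho$, so that the inclusions hold for a radius tending to $0$, and confirming that $K^{-\lambda}$ is nonempty; the latter is automatic once $\lambda$ is below the inradius of $K$, which holds for all large $i$.
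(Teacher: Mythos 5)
Your proof is correct, but note that the paper itself gives no proof of this lemma: it is quoted as a special case of the standard fact that Euclidean volume is continuous on the space of convex bodies under the Hausdorff metric, with a pointer to Gardner's \emph{Geometric Tomography}. Your sandwich argument is therefore a genuinely different, self-contained route. Its crux, as you correctly identify, is the inner inclusion $K^{-\lambda_i} \subseteq A_i$, and your proof of it is sound: writing $x = p + \delta u$ and $y = x + \lambda_i u = p + (\delta + \lambda_i) u$, the nearest-point inequality $\langle a - p, u \rangle \leq 0$ together with Cauchy--Schwarz gives $d(y, A_i) \geq \delta + \lambda_i > \lambda_i$, which contradicts $y \in B_{\lambda_i}(x) \subseteq K \subseteq (A_i)_{\lambda_i}$; this is exactly where convexity of $A_i$ enters. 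The two one-sided limits are also handled correctly: $K_{1/n}$ decreases to $K$ (by closedness of $K$) inside the finite-volume set $K_1$, giving continuity from above, and $K^{-1/n}$ increases to $\operatorname{int} K$, whose volume equals $\vol K$ because the boundary of a convex body is Lebesgue-null. What the paper's citation buys is brevity; what your argument buys is self-containedness---it needs only the nearest-point projection onto a closed convex set, monotone continuity of Lebesgue measure, and nullity of $\partial K$, avoiding Steiner's formula and the general theory of Hausdorff convergence entirely. Two cosmetic tightenings: measurability of $K^{-\lambda}$ is automatic, since $K^{-\lambda} = \bigcap_{\norm{u} \leq \lambda} (K - u)$ is closed and convex; and its nonemptiness is never actually needed, because an empty inner body makes the lower bound $\vol K^{-\lambda_i} \leq \vol A_i$ trivial, and your squeeze only invokes the behavior for small $\lambda$, where $K^{-\lambda}$ is nonempty in any case.
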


The concept of pseudopyramid was introduced in~\cite{Eu}.
If $K$ is a convex body,
the pseudopyramid $\ppyr K$ is defined to be
(Figure~\ref{fig:pseudopyramid})
\begin{equation*}
    \ppyr K = \bigcup_{0 \leq \lambda \leq 1} \lambda K.
\end{equation*}

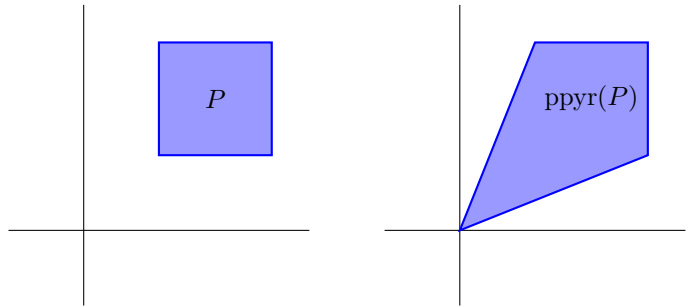
\begin{figure}[t]
    \centering
    \begin{tikzpicture}
        \draw (-1, 0) -- (3, 0);
        \draw (0, -1) -- (0, 3); 
        \filldraw [thick, blue, fill = blue!40] (1, 1) rectangle (2.5, 2.5);
        \node at (1.75, 1.75) {$P$};

        \begin{scope}[xshift = 5cm]
            \draw (-1, 0) -- (3, 0);
            \draw (0, -1) -- (0, 3); 
            \filldraw [thick, blue, fill = blue!40]
                (0, 0) -- (1, 2.5) -- (2.5, 2.5) -- (2.5, 1) -- cycle;
            \node at (1.75, 1.75) {$\ppyr(P)$};
        \end{scope}
    \end{tikzpicture}
    \caption{
        Pseudopyramid of a polytope.
    }
    \label{fig:pseudopyramid}
\end{figure}

We will use the following lemma,
which is a consequence of Lemma 1 of~\cite{Eu}.

\begin{lemma}
    \label{thm:volume-of-pseudopyramid}
    Let $K$ and $H$ be convex bodies,
    and suppose that $L_K(s) = L_H(s)$ for all $s > 0$.
    Then $\vol \ppyr H = \vol \ppyr K$.
\end{lemma}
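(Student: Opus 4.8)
The plan is to prove that the Ehrhart function $L_K$ already determines $\vol \ppyr K$; the hypothesis $L_K = L_H$ then forces $\vol \ppyr K = \vol \ppyr H$ at once. The first step is the structural remark that $\ppyr K = \conv(\{0\} \cup K)$, so that $\ppyr K$ is itself a convex body and $s \cdot \ppyr K = \ppyr(sK) = \bigcup_{0 \le \mu \le s} \mu K$. Hence its Ehrhart function $L_{\ppyr K}(s) = \#(\ppyr(sK) \cap \mathbb Z^d)$ counts exactly those lattice points that are covered by some dilate $\mu K$ with $0 \le \mu \le s$, and the standard Ehrhart volume limit $\vol \ppyr K = \lim_{s \to \infty} s^{-d} L_{\ppyr K}(s)$ for the convex body $\ppyr K$ reduces the task to recovering the function $L_{\ppyr K}$ from $L_K$.

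The key point is that for a fixed $z \in \mathbb Z^d \setminus \{0\}$ the condition $z \in \mu K$ is equivalent to $\mu^{-1} z \in K$, and by convexity of $K$ along the ray through $z$ the set of admissible $\mu$ is a single closed interval $[\mu_-(z), \mu_+(z)]$. Thus $z$ is covered by scale $s$ precisely when $\mu_-(z) \le s$, so that
\begin{equation*}
    L_{\ppyr K}(s) = 1 + \#\{z \in \mathbb Z^d \setminus \{0\} \mid \mu_-(z) \le s\}.
\end{equation*}
On the other hand $L_K(\sigma) = \#\{z \mid \mu_-(z) \le \sigma \le \mu_+(z)\}$, so as $\sigma$ increases the step function $L_K$ jumps up by one at each entry scale $\mu_-(z)$ and down by one at each exit scale $\mu_+(z)$. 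The count above is therefore the accumulated upward variation of $L_K$ on $(0, s]$; writing $V(s)$ for the total variation of $L_K$ there, one gets the explicit formula $\vol \ppyr K = \lim_{s \to \infty} \tfrac{1}{2} s^{-d}(V(s) + L_K(s))$, whose right-hand side depends on $L_K$ alone. Since $L_K = L_H$, the two limits agree and the lemma follows.

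The main obstacle is that this jump-counting is only valid generically. At a scale $\sigma_0$ where one lattice point exits exactly as another enters, $L_K$ exhibits no net jump, so the upward variation read off from $L_K$ undercounts the true number of first entries, and indeed $L_{\ppyr K}(\sigma_0)$ is genuinely not recoverable from $L_K$ at such $\sigma_0$. I would resolve this by showing that the coincidence defect is negligible in the limit: a lattice point can be masked only when its entry scale $\sigma_0$ is one for which $\partial(\sigma_0 K)$ carries a second lattice point on its near side, which confines the exceptional scales to a lower-dimensional set and bounds the total defect up to scale $s$ by $o(s^d)$. Dividing by $s^d$ then removes it and leaves the displayed limit intact. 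Making this $o(s^d)$ estimate precise is the one delicate point, and it is exactly the content packaged in Lemma~1 of~\cite{Eu}, which may instead be invoked directly.
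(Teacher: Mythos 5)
Your proposal is correct, and in its fallback form it is precisely the paper's proof: the paper simply cites Lemma~1 of~\cite{Eu} to conclude that $L_K = L_H$ forces $L_{\ppyr K} = L_{\ppyr H}$, and then applies the volume limit $\vol \ppyr K = \lim_{s \to \infty} s^{-d} L_{\ppyr K}(s)$. Your primary route is genuinely different in that it does not cite that lemma but reproves it: writing $s \ppyr K = \bigcup_{0 \le \mu \le s} \mu K$ and attaching to each nonzero lattice point $z$ its entry scale $\mu_-(z)$, you recover $L_{\ppyr K}(s) = 1 + \#\{z \ne 0 : \mu_-(z) \le s\}$ from the jumps of $L_K$, which buys an explicit formula for $\vol \ppyr K$ in terms of $L_K$ alone rather than a mere uniqueness statement.

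However, the ``main obstacle'' you then set up is not actually there, and the assertion that $L_{\ppyr K}(\sigma_0)$ is ``genuinely not recoverable from $L_K$'' at a coincidence scale is false --- indeed it would contradict the very Lemma~1 you propose to fall back on, which says exactly that $L_K$ determines $L_{\ppyr K}$. Since the dilates $\sigma K$ are closed, each $z \ne 0$ lies in $\sigma K$ for $\sigma$ in an interval $[\mu_-(z), \mu_+(z)]$ that is closed on the left (note $\mu_+(z) = \infty$ is possible when $0 \in K$; your ``closed interval'' claim misses this, harmlessly). Hence an entry is a \emph{left} discontinuity and an exit is a \emph{right} discontinuity of $L_K$:
\begin{align*}
    L_K(\sigma_0) - L_K(\sigma_0^-) &= \#\{z : \mu_-(z) = \sigma_0\} \ge 0, \\
    L_K(\sigma_0) - L_K(\sigma_0^+) &= \#\{z : \mu_+(z) = \sigma_0\} \ge 0,
\end{align*}
where $L_K(\sigma_0^\pm)$ denote one-sided limits. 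A simultaneous entry and exit at $\sigma_0$ therefore produces a spike --- $L_K(\sigma_0)$ exceeds both one-sided limits by one --- rather than a flat passage, so the two jumps can never cancel: the total variation (a supremum over partitions, which sees the value at $\sigma_0$ itself) equals exactly the number of entries plus the number of exits, and the upward variation counts entries exactly. Your displayed formula $\vol \ppyr K = \lim_{s \to \infty} \tfrac{1}{2} s^{-d} \left( V(s) + L_K(s) \right)$ is thus an exact identity, no $o(s^d)$ defect estimate is needed, and with the masking paragraph deleted your argument stands as a complete, self-contained proof.
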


In other words,
whenever we know $L_K$,
we may assume we also know $\vol \ppyr K$.

\begin{proof}
    Lemma 1 of~\cite{Eu} states that,\footnote{
        Tecnically,
        Lemma 1 of~\cite{Eu} only states this for polytopes,
        but the proof holds verbatim for convex bodies.
    }
    if $L_K(s) = L_H(s)$,
    then $L_{\ppyr K}(s) = L_{\ppyr H}(s)$.
    Since
    \begin{equation*}
        \lim_{s \to \infty} \frac{L_{\ppyr K}(s)}{s^d} = \vol \ppyr K
    \end{equation*}
    (and similarly for $H$),
    we have $\vol \ppyr K = \vol \ppyr H$.
\end{proof}

\section{Pseudopyramid volumes and areas of projections}
\label{sec:projection-areas}

In this section,
we will show how to compute the function $V_K$
in terms of the numbers $\vol \ppyr(P + w)$ for integer $w$.

Given a convex body $K$,
define its spherical projection $S(K)$ by
(Figure~\ref{fig:spherical-projection})
\begin{equation*}
    S(K) = \left\{ \frac{x}{\norm x}
        \,\middle|\, x \in K \text{ and } x \neq 0 \right
    \}.
\end{equation*}

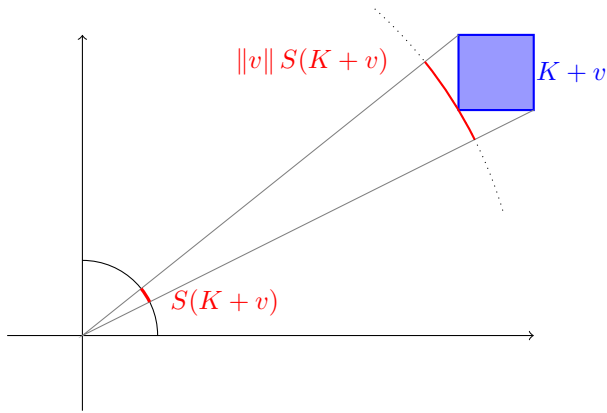
\begin{figure}[t]
    \centering
    \begin{tikzpicture}
        \draw[->] (-1, 0) -- (6, 0);
        \draw[->] (0, -1) -- (0, 4);

        \draw (1, 0) arc [start angle = 0, end angle = 90, radius = 1];

        \filldraw [thick, blue, fill = blue!40]
            (5, 3) -- (6, 3) -- (6, 4) -- (5, 4) -- cycle;
        \node[blue] at (6.5, 3.5) {$K + v$};

        \draw [gray, thin] (5, 4) -- (0, 0) -- (6, 3);

        \pgfmathsetmacro{\startangle}{atan(3/6)}
        \pgfmathsetmacro{\endangle}{atan(4/5)}
        \pgfmathsetmacro{\radius}{veclen(5, 3)}

        \draw [red, very thick] (\startangle:1)
            arc [start angle = \startangle, end angle = \endangle, radius = 1];
        \draw [red] (\startangle:1)
            +(1, 0) node {$S(K + v)$};

        \draw [dotted] (\startangle - 10:\radius)
            arc [start angle = \startangle - 10,
                end angle = \endangle + 10, radius = \radius];
        \draw [red, thick] (\startangle : \radius)
            arc [start angle = \startangle, end angle = \endangle, radius = \radius]
            ++(-1.5, 0) node {$\norm v S(K + v)$};
    \end{tikzpicture}
    \caption{
        Spherical projection of $K + v$.
    }
    \label{fig:spherical-projection}
\end{figure}

The connection between pseudopyramid volumes
and areas of projections
can be seen in Figure~\ref{fig:spherical-projection}.
The set $\norm v S(K + v)$ is a dilation of the projection $S(K + v)$ of $K$.
Note that the shape of $\norm v S(K + v)$
``looks like'' the orthogonal projection of $K$ in $\{v\}^\perp$;
that is,
the area of $\norm v S(K + v)$
approximates $V_K(v)$.

If the pseudopyramid were an actual pyramid
(with base $\norm v S(K + v)$),
then using the formula $v = \frac{Ah}{d}$ for the volume of a pyramid
would allow us to discover what is the area of the projection,
which would give an approximation to $V_K(v)$.
We will show that this formula is true ``in the infinity'';
that is,
using limits,
we can recover the area of the projection
using taller and taller pseudopyramids.

\subsection{Approximating spherical projections}

For convex bodies $K$,
the set $S(K)$ is a manifold\footnote{
    Technically,
    $S(K)$ will be a manifold-with-corners
    (see~\cite[p.~137]{SpivakManifolds}).
    However,
    their interiors relative to the $(d-1)$-dimensional sphere $S^{d-1}$
    are manifolds,
    and since we're dealing with areas
    there will be no harm in ignoring these boundaries.
}.
If $K$ does not contain the origin in its interior,
then $S(K)$ may be parameterized with a single coordinate system;
that is,
there is a set $U \subseteq \mathbb R^{d-1}$
and a continuously differentiable function $\varphi: U \to S(K)$
which is a bijection between $U$ and $S(K)$.
Since we want to move $P$ towards infinity,
this shall always be the case
if the translation vector is long enough.
In this case,
we define its area to be~\cite[p.~126]{SpivakManifolds}
\begin{align*}
    \area S(K) &= \int_U \norm{D_1 \varphi \times \dots \times D_{d-1} \varphi} \\
        &= \int_U \norm{
            \frac{\partial \varphi}{\partial x_1}
            \times \dots \times
            \frac{\partial \varphi}{\partial x_{d-1}} dx_1 \dots dx_{d-1}
        }.
\end{align*}

The following theroem states that
the spherical projection approximates,
in a sense,
the orthogonal projection,
for large enough translation vectors.

\begin{theorem}
    \label{thm:spherical-approaches-orthogonal}
    Let $v$ be a unit vector
    and $K \subseteq \mathbb R^d$ a convex body.
    Then
    \begin{equation*}
        \lim_{\mu \to \infty} \mu^{d-1} \area S(P + \mu v) = V_K(v).
    \end{equation*}
\end{theorem}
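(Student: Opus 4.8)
\emph{Overall strategy.} The plan is to transport the area on the sphere to a fixed hyperplane through the gnomonic (central) chart, so that $\area S(K+\mu v)$ becomes an integral over the central projection of $K+\mu v$ onto a tangent hyperplane, and then to track how that projection shrinks as $\mu\to\infty$. Throughout I read the body being projected as $K$. First I would use that $\area S(\cdot)$, $V_{(\cdot)}(\cdot)$, translation and dilation are all equivariant under rotations of $\mathbb R^d$, so I may rotate and assume $v=e_d$; write points as $(y,t)$ with $y\in\mathbb R^{d-1}$, $t\in\mathbb R$, and let $\pi(K)=\{y : (y,t)\in K\text{ for some }t\}$ be the orthogonal projection onto $\{v\}^\perp$, so that $V_K(v)=\area\pi(K)$. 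For $\mu$ large enough that $K+\mu v\subseteq\{t>0\}$, the set $K+\mu v$ misses the origin, $S(K+\mu v)$ lies in the open upper hemisphere, and it is covered by the single chart
\[
    \varphi(z)=\frac{(z,1)}{\sqrt{1+\norm{z}^2}},
    \qquad z\in\Pi_\mu := \left\{\frac{y}{\mu+t} : (y,t)\in K\right\},
\]
where $\Pi_\mu$ is the central projection of $K+\mu v$ from the origin onto $\{t=1\}$; being the central projection of a convex body missing the origin, $\Pi_\mu$ is convex.

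\emph{Reduction to a plane integral.} Using the area formula for $S(\cdot)$ through $\varphi$, I would write $\area S(K+\mu v)=\int_{\Pi_\mu}J(z)\,dz$ with $J(z)=\norm{D_1\varphi\times\cdots\times D_{d-1}\varphi}$. A short computation gives $D_i\varphi(0)=e_i$ for $1\le i\le d-1$, so $J(0)=\norm{e_1\times\cdots\times e_{d-1}}=1$, and $J$ is continuous since $\varphi$ is smooth (the explicit form of $J$ is not needed, only continuity and $J(0)=1$). Substituting $z=w/\mu$, so that $dz=\mu^{-(d-1)}\,dw$ and $\Pi_\mu$ becomes $\mu\Pi_\mu$, turns the target into
\[
    \mu^{d-1}\area S(K+\mu v)=\int_{\mu\Pi_\mu}J(w/\mu)\,dw.
\]

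\emph{Scaling of the central projection.} The core estimate is that $\mu\Pi_\mu\to\pi(K)$ in Hausdorff distance. Indeed $\mu\Pi_\mu=\{\,y/(1+t/\mu) : (y,t)\in K\,\}$, and since $K$ is compact the map $(y,t)\mapsto y/(1+t/\mu)$ converges uniformly on $K$, as $\mu\to\infty$, to $(y,t)\mapsto y$, whose image is $\pi(K)$; uniform convergence then forces $\rho(\mu\Pi_\mu,\pi(K))\to0$. Applying Lemma~\ref{thm:hausdorff-distance} in $\mathbb R^{d-1}$ to the convex bodies $\mu\Pi_\mu$ and $\pi(K)$ (where $\vol$ is the $(d-1)$-dimensional area) yields $\area(\mu\Pi_\mu)\to\area\pi(K)=V_K(v)$.

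\emph{Combining the pieces and the main obstacle.} Finally, for large $\mu$ the sets $\mu\Pi_\mu$ all lie in a fixed bounded region, so $J(w/\mu)\to1$ uniformly there, and
\[
    \left|\int_{\mu\Pi_\mu}J(w/\mu)\,dw-\area(\mu\Pi_\mu)\right|
    \le\left(\sup_{w\in\mu\Pi_\mu}\left|J(w/\mu)-1\right|\right)\area(\mu\Pi_\mu)\to0,
\]
which together with the previous paragraph gives $\mu^{d-1}\area S(K+\mu v)\to V_K(v)$. I expect the main obstacle to be the third step: making rigorous that the central-projection areas scale like $\mu^{-(d-1)}V_K(v)$, i.e.\ the Hausdorff convergence $\mu\Pi_\mu\to\pi(K)$ and the attendant continuity of $(d-1)$-dimensional area (and checking that these are genuine convex bodies so the lemma applies). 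By comparison, the single-chart justification, the computation $J(0)=1$, and the uniform Jacobian limit are routine once $\mu$ is taken large.
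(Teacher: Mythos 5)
Your proof is correct, and it shares the paper's overall skeleton (rotate so $v = e_d$; write the spherical area as a planar integral via one chart; prove Hausdorff convergence of a planar convex region to the orthogonal projection and invoke Lemma~\ref{thm:hausdorff-distance}; show the Jacobian tends to $1$ uniformly), but the chart and the planar region you track are genuinely different, and the difference has real consequences. The paper takes the orthogonal shadow $K_\mu$ of the rescaled spherical image $\mu S(K+\mu v)$ and parametrizes that image as the hemisphere graph $y \mapsto \bigl(y, \sqrt{\mu^2 - \norm{y}^2}\bigr)$ over $K_\mu$; since both the domain and the chart depend on $\mu$, it must estimate every partial derivative explicitly (getting $|\partial\varphi_d/\partial y_j| \leq N/\sqrt{\mu^2 - N^2}$) and use multilinearity of the generalized cross product to conclude uniform convergence of the area element. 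You instead use the gnomonic (central) projection onto $\{t=1\}$: the chart $\varphi(z) = (z,1)/\sqrt{1+\norm{z}^2}$ is fixed once and for all, only its domain $\Pi_\mu$ moves with $\mu$, the factor $\mu^{d-1}$ falls out of an exact change of variables $z = w/\mu$, and the uniform Jacobian estimate reduces to continuity of a single fixed function $J$ at $0$ together with $J(0)=1$ --- no derivative bounds or cross-product argument needed. Your Hausdorff estimate is also obtained by a different mechanism (uniform convergence of the explicit maps $(y,t) \mapsto y/(1+t/\mu)$ on the compact set $K$, versus the paper's pointwise comparison of the two projections of a common point $x$), though both give an $O(1/\mu)$ rate. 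The trade-off is that the paper's argument is fully explicit and self-contained, while yours is shorter and cleaner at the cost of leaning on two facts you should still verify for Lemma~\ref{thm:hausdorff-distance} and the area formula to apply: that $\mu\Pi_\mu$ is a genuine convex body in $\mathbb R^{d-1}$ (convexity and compactness you address; nonempty interior follows because central projection is an open map on the half-space $\{t>0\}$), and that the paper's $\area S(K+\mu v)$, defined through an unspecified parametrization, is chart-independent, so that your gnomonic chart computes the same quantity (the standard change-of-variables fact, which the paper also uses implicitly).
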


\begin{proof}
    By rotating all objects involved if needed,
    we may assume that $v = (0, \dots, 0, 1)$.
    Let $N$ be large enough that $K \subseteq B_N(0)$;
    we'll assume that $\mu > N$,
    so that $K + \mu v$ lies strictly above the hyperplane $x_d = 0$.

    Let $K'$ be the orthogonal projection of $K$ into $\{v\}^\perp$.
    We'll think of $K'$ as being a subset of $\mathbb R^{d-1}$.
    Denote by $K_\mu$ the projection of the set $\mu S(K + \mu v)$ on $\mathbb R^{d-1}$
    (Figure~\ref{fig:spherical-orthogonal-projection});
    that is,
    first project $K + \mu v$ to the sphere with radius $\mu$,
    then discard the last coordinate.
    Nnote this is similar to project it to the hyperplane $x_d = 0$.
    We'll show that,
    as $\mu$ goes to infinity,
    both the Hausdorff distance between $K_\mu$ and $K'$
    and the difference between the volume of $K_\mu$ and the area of $\mu S(K + \mu v)$
    tend to zero.

    \begin{figure}[t]
        \centering
        \begin{tikzpicture}
            \pgfmathsetmacro{\Mu}{4}
            \pgfmathsetmacro{\N}{2}
            \draw[gray, thin] (-\Mu - 1, 0) -- (\Mu + 1, 0);

            \draw[dashed] (\Mu, 0) arc [radius = \Mu, start angle = 0, end angle = 180];
            \draw[|<->|] (-\Mu, 0) -- node [below] {$\mu$} (0, 0);

            \draw[dashed] (0, \Mu) circle [radius = \N];
            \draw[|<->|] (-\N, \Mu) -- node [above] {$N$} (0, \Mu);

            \coordinate (A) at (0.6, \Mu + 1.0);
            \coordinate (B) at (1.6, \Mu + 0.8);
            \coordinate (C) at (1.8, \Mu + 0.4);
            \coordinate (D) at (0.8, \Mu + 0.4);
            \draw [thick, blue, fill = blue!40] (A) -- (B) -- (C) -- (D) -- cycle;
            \node [blue] at (\N + 0.5, \Mu + 0.5) {$K + \mu v$};

            \draw [blue, line width = 2pt, yshift = 1pt]
                (A |- 0, 0) -- node [above] {$K'$} (C |- 0, 0);

            \pgfmathanglebetweenpoints{\pgfpointorigin}{\pgfpointanchor{C}{center}}
            \edef\startangle{\pgfmathresult}
            \pgfmathanglebetweenpoints{\pgfpointorigin}{\pgfpointanchor{A}{center}}
            \edef\endangle{\pgfmathresult}
            \draw [very thick, red] (\startangle:\Mu)
                coordinate (S-right)
                arc [radius = \Mu, start angle = \startangle, end angle = \endangle]
                coordinate (S-left);
            \node [red, right = 0.5] at (S-right) {$\mu S(K + \mu v)$};

            \draw [green!30!black, line width = 2pt, yshift = -1pt]
                (S-left |- 0, 0) -- node [below] {$K_\mu$} (S-right |- 0, 0);

            \begin{scope}[very thin]
                \draw [red] (0, 0) -- (A);
                \draw [red] (0, 0) -- (C);

                \draw [blue] (A) -- (A |- 0, 0);
                \draw [blue] (C) -- (C |- 0, 0);
                \draw [green!30!black] (S-left) -- (S-left |- 0, 0);
                \draw [green!30!black] (S-right) -- (S-right |- 0, 0);
            \end{scope}
        \end{tikzpicture}
        \caption[
            Spherical and orthogonal projections.
        ]{
            The spherical projection $\mu S(K + \mu v)$,
            when projected orthogonally to the plane $x_d = 0$
            (the set $K_\mu$),
            approaches the volume of the projection $K'$.
        }
        \label{fig:spherical-orthogonal-projection}
    \end{figure}
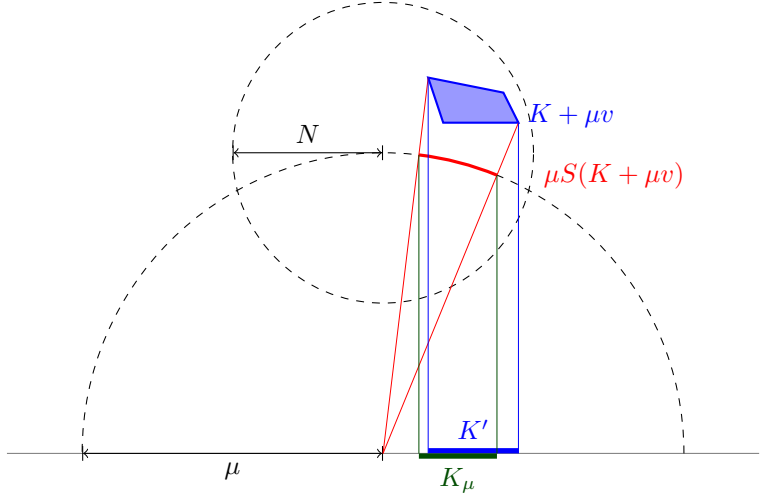

    First, let's bound the Hausdorff distance between $K'$ and $K_\mu$.
    If $x \in K + \mu v$,
    then $x$ gets projected to a point $x_0 \in K'$
    by just discarding the last coordinate;
    however, to be projected to a point $x_1 \in K_\mu$,
    first we replace $x$ by $x' = \frac{\mu}{\norm x} x$
    to get a point $x' \in \mu S(K + \mu v)$,
    and then the last coordinate of $x'$ is discarded.
    Note that $x_1 = \frac{\mu}{\norm x} x_0$;
    therefore,
    the distance between these two points is
    \begin{equation*}
        \norm{x_0 - x_1}
            = \left| 1 - \frac{\mu}{\norm x} \right| \norm{x_0}
            = \frac{|\norm x - \mu|}{\norm x} \norm{x_0}
    \end{equation*}

    We have $x \in K + \mu v \subseteq B_N(\mu v)$,
    so $\mu - N \leq \norm x \leq \mu + N$.
    As $v = (0, \dots, 0, 1)$ and $x_0$ is $x$ without its last coordinate,
    we have $\norm{x_0} \leq N$
    (because, in $\mathbb R^{d-1}$,
    we have $x_0 \in B_N(0)$).
    So,
    the distance between $x_0$ and $x_1$ is at most $\frac{N^2}{\mu + N}$.

    Every point in $K'$ and in $K_\mu$ is obtained through these projections.
    This means that,
    given any point $x_0$ in one of the sets,
    we may find another point $x_1$ in the other set
    which is at a distance of at most $\frac{N^2}{\mu + N}$ from the former,
    because we can just pick a point $x$ whose projection is $x_0$;
    then its other projection $x_1$ will be close to $x_0$.
    Thus
    \begin{equation*}
        \rho(K', K_\mu) \leq \frac{N^2}{\mu + N},
    \end{equation*}
    so by Theorem~\ref{thm:hausdorff-distance}
    the volumes of $K_\mu$ converges to $\vol K'$.

    Now,
    let's relate $\vol K_\mu$ with $\mu^{d-1} \area S(K + \mu v)$.
    If $y = (y_1, \dots, y_d)$ is a point in $\mu S(K + \mu v)$,
    we know that $\norm y = \mu$
    and that $y_d > 0$
    (because we're assuming $\mu > N$).
    Therefore,
    if we define $\varphi: K_\mu \to \mu S(K + \mu v)$ by
    \begin{equation*}
        \varphi(y_1, \dots, y_{d-1}) = \left(
            y_1, \dots, y_{d-1}, \sqrt{\mu^2 - y_1^2 - \dots - y_{d-1}^2}
        \right),
    \end{equation*}
    then $\varphi$ will be a differentiable bijection
    between $K_\mu$ and $\mu S(K + \mu v)$,
    so that $\varphi$ is a parametrization for $\mu S(K + \mu v)$.

    For the partial derivatives,
    we have $\frac{\partial \varphi_i}{\partial y_j} = [i = j]$ if $i < d$;
    that is,
    the partial derivatives behave like the identity.
    For $i = d$,
    we have
    \begin{equation*}
        \frac{\partial \varphi_d}{\partial y_j} =
            \frac{y_j}{\sqrt{\mu^2 - y_1^2 - \dots - y_{d-1}^2}}.
    \end{equation*}

    Now, by definition of $N$, we have
    \begin{equation*}
        \left| \frac{\partial \varphi_d}{\partial y_j} \right|
            \leq \frac{N}{\sqrt{\mu^2 - N^2}},
    \end{equation*}
    so the vectors $D_i \varphi$ converge uniformly to $e_i$ for large $\mu$.
    Since the generalized cross product is linear in each entry,
    the vector $D_1 \varphi \times \dots \times D_{d-1} \varphi$
    converges uniformly to $e_d$,
    and thus the number
    \begin{align*}
        \left| \vol K_\mu - \area \mu S(K + \mu v) \right|
        &=
            \left| \int_{K_\mu} 1 -
                \int_{K_\mu} \norm{D_1 \varphi \times \dots \times D_{d-1} \varphi}
            \right| \\
        &\leq
            \int_{K_\mu} \Big|
                1 - \norm{D_1 \varphi \times \dots \times D_{d-1} \varphi}
            \Big|
    \end{align*}
    converges to zero.

    Now, since $\area (\mu S(K + \mu v)) = \mu^{d-1} \area S(K + \mu v)$,
    combining these two convergence results gives the theorem.
\end{proof}

\subsection{Limit behavior of pseudopyramids}

Now we'll show how to use the pseudopyramids to compute these projections.

Let $K$ be a pseudopyramid.
Define the \emph{outer radius} $R(K)$ of $K$
to be the smallest number such that
the ball of radius $R(K)$ around the origin contains $K$.
That is,
\begin{equation*}
    R(K) = \inf \{R \geq 0 \mid K \subseteq B_R(0) \}.
\end{equation*}

Define the \emph{front shell} of $K$
to be the set of points in the boundary of $K$
which are not contained in any facet passing through the origin;
that is,
the set of points $x$ in the boundary of $K$ such that
$\lambda x$ is contained in the interior of $K$ for all $0 < \lambda < 1$.
Define, then,
the \emph{inner radius} $r(K)$ of $K$
to be the largest number such that
the ball of radius $r(K)$ around the origin
contains no points of the front shell of $K$
(Figure~\ref{fig:inner-outer-radius}).
Note that this is equivalent to $r(K) S(K)$ to be contained in $K$;
that is,
\begin{equation*}
    r(K) = \sup \{r \geq 0 \mid r S(K) \subseteq K\}.
\end{equation*}

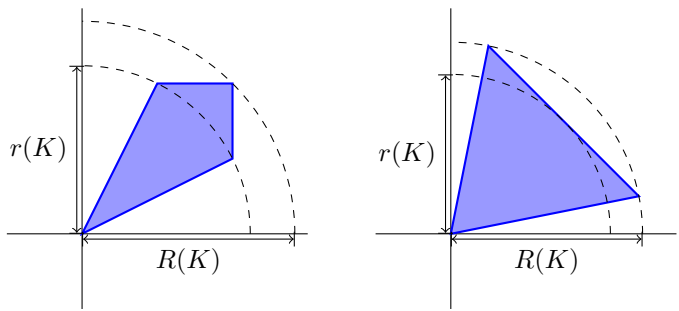
\begin{figure}[t]
    \centering
    \begin{tikzpicture}
        \draw (-1, 0) -- (3, 0);
        \draw (0, -1) -- (0, 3);
        \draw [thick, blue, fill = blue!40]
            (0, 0) -- (2, 1) -- (2, 2) -- (1, 2) -- cycle;

        \pgfmathsetmacro{\innerradius}{veclen(2, 1)}
        \pgfmathsetmacro{\outerradius}{veclen(2, 2)}
        \draw [dashed] (\innerradius, 0) arc
            [radius = \innerradius, start angle = 0, end angle = 90];
        \draw [dashed] (\outerradius, 0) arc
            [radius = \outerradius, start angle = 0, end angle = 90];

        \draw [xshift = -2pt, |<->|]
            (0, 0) -- node[left] {$r(K)$} (0, \innerradius);
        \draw [yshift = -2pt, |<->|]
            (0, 0) -- node[below] {$R(K)$} (\outerradius, 0);
    \end{tikzpicture}
    \qquad
    \begin{tikzpicture}
        \draw (-1, 0) -- (3, 0);
        \draw (0, -1) -- (0, 3);
        \draw [thick, blue, fill = blue!40]
            (0, 0) -- (2.5, 0.5) -- (0.5, 2.5) -- cycle;

        \pgfmathsetmacro{\innerradius}{veclen(1.5, 1.5)}
        \pgfmathsetmacro{\outerradius}{veclen(2.5, 0.5)}
        \draw [dashed] (\innerradius, 0) arc
            [radius = \innerradius, start angle = 0, end angle = 90];
        \draw [dashed] (\outerradius, 0) arc
            [radius = \outerradius, start angle = 0, end angle = 90];

        \draw [xshift = -2pt, |<->|]
            (0, 0) -- node[left] {$r(K)$} (0, \innerradius);
        \draw [yshift = -2pt, |<->|]
            (0, 0) -- node[below] {$R(K)$} (\outerradius, 0);
    \end{tikzpicture}
    \caption{
        Inner and outer radius for two pseudopyramids.
    }
    \label{fig:inner-outer-radius}
\end{figure}

We leave the following lemma to the reader.
It relates the inner and outer radii
with the area of the spherical projection.

\begin{proposition}
    \label{thm:radii-vs-spherical-projection}
    Let $K \subseteq \mathbb R^d$ be a convex body
    which does not contain the origin.
    Then
    \begin{equation*}
        \frac{\vol \ppyr K}{R(\ppyr K)^d}
        \leq
        \frac{\area S(K)}{d}
        \leq
        \frac{\vol \ppyr K}{r(\ppyr K)^d}.
    \end{equation*}
\end{proposition}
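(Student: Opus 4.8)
The plan is to compute $\vol \ppyr K$ in spherical coordinates and exploit the fact that the pseudopyramid is star-shaped about the origin. The first observation is that $S(\ppyr K) = S(K)$: since every nonzero point of $\ppyr K$ has the form $\lambda y$ with $y \in K$ and $\lambda > 0$, it points in the same direction as a point of $K$. Moreover, $\ppyr K$ is compact (being the continuous image of $[0,1] \times K$ under $(\lambda, x) \mapsto \lambda x$) and star-shaped about the origin, since $\lambda x \in \ppyr K$ whenever $x \in \ppyr K$ and $0 \leq \lambda \leq 1$. Hence, for each direction $\omega \in S(K)$, the set $\{t \geq 0 \mid t\omega \in \ppyr K\}$ is a closed interval $[0, g(\omega)]$, where $g$ denotes the radial function of $\ppyr K$.

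Next I would integrate in polar coordinates. Writing the Lebesgue volume as an iterated integral over directions $\omega \in S(K)$ and radii $t \in [0, g(\omega)]$, with the polar Jacobian $t^{d-1}$, gives
\begin{equation*}
    \vol \ppyr K
    = \int_{S(K)} \int_0^{g(\omega)} t^{d-1}\, dt\, d\omega
    = \frac{1}{d}\int_{S(K)} g(\omega)^d\, d\omega,
\end{equation*}
where $d\omega$ is the $(d-1)$-dimensional surface measure on the sphere, so that $\area S(K) = \int_{S(K)} d\omega$.

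The two inequalities then follow from bounding $g$ pointwise. By definition of the outer radius, $\ppyr K \subseteq B_{R(\ppyr K)}(0)$, so every $t\omega \in \ppyr K$ satisfies $t \leq R(\ppyr K)$, giving $g(\omega) \leq R(\ppyr K)$ for all $\omega$; and by the characterization of the inner radius, $r(\ppyr K)\, S(K) \subseteq \ppyr K$, which forces $g(\omega) \geq r(\ppyr K)$ for all $\omega$. Substituting the constant bounds $r(\ppyr K) \leq g(\omega) \leq R(\ppyr K)$ into the integral and factoring them out yields
\begin{equation*}
    \frac{r(\ppyr K)^d}{d}\,\area S(K)
    \leq \vol \ppyr K
    \leq \frac{R(\ppyr K)^d}{d}\,\area S(K),
\end{equation*}
and dividing the left half by $R(\ppyr K)^d$ and the right half by $r(\ppyr K)^d$ rearranges this into the claimed chain of inequalities.

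The only genuine technical point is justifying the polar-coordinate volume formula, namely that $g$ is measurable and the change of variables is valid. This is not hard: the radial function of a compact star-shaped set is upper semicontinuous, hence Borel measurable, and the polar change of variables holds for any measurable radial function. Since the excerpt already treats the relative boundary of $S(K)$ on the sphere as negligible for the purpose of computing areas, I would not dwell on these measure-theoretic details.
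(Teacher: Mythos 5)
The paper offers no proof of this proposition at all --- it says ``We leave the following lemma to the reader'' --- so there is no official argument to compare yours against; judged on its own merits, your proof is correct and fills the gap. The polar-coordinates identity
\begin{equation*}
    \vol \ppyr K = \frac{1}{d}\int_{S(K)} g(\omega)^d\,d\omega,
\end{equation*}
combined with the pointwise bounds $r(\ppyr K) \leq g(\omega) \leq R(\ppyr K)$, is precisely the ``spherical sector sandwich'' that the statement is designed to encode, and your supporting observations all check out: $S(\ppyr K) = S(K)$, the pseudopyramid is compact and star-shaped about the origin (so the radial function $g$ is well defined and upper semicontinuous, hence measurable), the containment $r(\ppyr K)\,S(K) \subseteq \ppyr K$ follows from closedness even though $r$ is defined as a supremum, and the direction of each inequality matches the claim after dividing by $R(\ppyr K)^d$ and $r(\ppyr K)^d$ respectively. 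The one step worth making explicit is the identification of the paper's $\area S(K)$ --- defined as a parametrized surface integral $\int_U \norm{D_1\varphi \times \dots \times D_{d-1}\varphi}$ --- with the $(d-1)$-dimensional spherical measure $d\omega$ appearing in the polar change of variables; this identification is standard for submanifolds of the sphere, and glossing over the relative boundary of $S(K)$ is consistent with what the paper itself does, so this is a presentational remark rather than a gap.
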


This lemma,
combined with Lemma~\ref{thm:spherical-approaches-orthogonal},
shows how to calculate the volume of the orthogonal projection
knowing only the volumes of the pseudopyramids.

\begin{lemma}
    \label{thm:volume-projection-from-volume-ppyr}
    Let $K \subseteq \mathbb R^d$ be a convex body,
    and $v$ any unit vector.
    Then
    \begin{equation*}
        \lim_{\mu \to \infty}
            \frac{\vol \ppyr(K + \mu v)}{\mu}
            = \frac{V_K(v)}{d}.
    \end{equation*}
\end{lemma}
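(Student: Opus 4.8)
The plan is to apply Proposition~\ref{thm:radii-vs-spherical-projection} to the translated bodies $K + \mu v$ and to squeeze $\vol \ppyr(K + \mu v)$ between two quantities controlled by $\area S(K + \mu v)$, whose precise asymptotics are supplied by Theorem~\ref{thm:spherical-approaches-orthogonal}. Fix $N$ with $K \subseteq B_N(0)$, and restrict to $\mu > N$, so that $K + \mu v$ avoids the origin and the proposition applies. Writing it out for $K + \mu v$ and rearranging both inequalities gives the sandwich
\begin{equation*}
    \frac{\area S(K + \mu v)}{d} \, r(\ppyr(K + \mu v))^d
    \;\leq\; \vol \ppyr(K + \mu v) \;\leq\;
    \frac{\area S(K + \mu v)}{d} \, R(\ppyr(K + \mu v))^d.
\end{equation*}

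The next step is to control the two radii. Since $\ppyr(K + \mu v) = \bigcup_{0 \le \lambda \le 1} \lambda(K + \mu v)$, its outer radius equals $\max_{x \in K + \mu v} \norm{x}$, which lies in $[\mu - N, \mu + N]$ because $K + \mu v \subseteq B_N(\mu v)$. For the inner radius, observe that the spherical projection is unchanged by the pseudopyramid construction, so one must check that $r\,S(K + \mu v) \subseteq \ppyr(K + \mu v)$ for every $r \le \mu - N$: given a direction $u \in S(K + \mu v)$, pick $x \in K + \mu v$ with $x/\norm{x} = u$; then $\norm{x} \ge \mu - N \ge r$, so $ru = (r/\norm{x})\,x$ is a point of $\lambda(K + \mu v)$ with $\lambda = r/\norm{x} \in [0,1]$, hence lies in the pseudopyramid. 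This yields $\mu - N \le r(\ppyr(K + \mu v)) \le R(\ppyr(K + \mu v)) \le \mu + N$.

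Finally I would divide the sandwich by $\mu$ and rewrite each side as $\tfrac{1}{d}\bigl(\mu^{d-1}\area S(K + \mu v)\bigr)\bigl(\rho_\mu/\mu\bigr)^d$, where $\rho_\mu$ is the relevant radius. Theorem~\ref{thm:spherical-approaches-orthogonal} gives $\mu^{d-1}\area S(K + \mu v) \to V_K(v)$, while the radius bounds force $r(\ppyr(K + \mu v))/\mu \to 1$ and $R(\ppyr(K + \mu v))/\mu \to 1$, so both the lower and upper bounds converge to $V_K(v)/d$. The squeeze theorem then delivers the claim. I expect the only genuinely delicate point to be the inner-radius verification above — confirming that the front shell of $\ppyr(K + \mu v)$ stays outside $B_{\mu - N}(0)$ — since the outer radius and the limit computation are routine once the asymptotic $r_\mu/\mu, R_\mu/\mu \to 1$ is in hand.
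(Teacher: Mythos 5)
Your proposal is correct and follows essentially the same route as the paper's own proof: apply Proposition~\ref{thm:radii-vs-spherical-projection} to $K + \mu v$, bound the inner and outer radii of $\ppyr(K + \mu v)$ between $\mu - N$ and $\mu + N$, and conclude via Theorem~\ref{thm:spherical-approaches-orthogonal} and the squeeze theorem. The only (immaterial) difference is that you verify the inner-radius bound through the characterization $r(\ppyr K) = \sup\{r \geq 0 \mid r\,S(K) \subseteq \ppyr K\}$, whereas the paper argues directly that the front shell of $\ppyr(K + \mu v)$ consists of points of $K + \mu v$, all of norm at least $\mu - N$.
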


\begin{proof}
    Let $N$ be large enough so that $K \subseteq B_N(0)$.
    For any $\mu$,
    as $v$ is a unit vector,
    we have $K + \mu v \subseteq B_{N + \mu}(0)$,
    so
    \begin{equation*}
        R(\ppyr(K + \mu v)) \leq \mu + N.
    \end{equation*}

    Since all the points in the front shell of $\ppyr(K + \mu v)$
    are points of $K$,
    all of them must have norm greater or equal to $\mu - N$.
    Therefore,
    no origin-centered ball with radius smaller than that can contain these points.
    Thus,
    \begin{equation*}
        r(\ppyr(K + \mu v)) \geq \mu - N.
    \end{equation*}

    Using these two inequalities
    and Proposition~\ref{thm:radii-vs-spherical-projection}
    gives
    \begin{equation*}
        \frac{\vol \ppyr(K + \mu v)}{(\mu + N)^d}
        \leq
        \frac{\area S(K + \mu v)}{d}
        \leq
        \frac{\vol \ppyr(K + \mu v)}{(\mu - N)^d},
    \end{equation*}
    which may be rewritten as
    \begin{equation*}
        \begin{adjustbox}{center} 
            $\displaystyle
                \frac{(\mu - N)^d}{\mu^d} \frac{\mu^{d-1} \area S(K + \mu v)}{d}
                \leq
                \frac{\vol \ppyr(K + \mu v)}{\mu}
                \leq
                \frac{(\mu + N)^d}{\mu^d} \frac{\mu^{d-1} \area S(K + \mu v)}{d}.
            $
        \end{adjustbox}
    \end{equation*}

    Now Theorem~\ref{thm:spherical-approaches-orthogonal}
    and the squeeze theorem finish the proof.
\end{proof}

For example,
for $K = [0, 1]^2$ and $v = (1, 0)$,
we have $\vol \ppyr(K + \mu v) = 1 + \frac{\mu}{2}$,
so $\lim_{\mu \to \infty} \frac{\vol \ppyr(K + \mu v)}{\mu} = \frac12$,
which is precisely one-half of the area of $\{0\} \times [0, 1]$,
the projection $K'$ of $K$ on the $y$-axis.
This highlights that,
for large $\mu$,
the pseudopyramid $\ppyr(K + \mu v)$ ``behaves like'' an actual pyramid,
with height $\mu$ and base $K'$.

\subsection{Piecing everything together}

\begin{restatetheorem}{thm:convex-body-translation-variant}
    Let $K$ and $H$ be symmetric convex bodies,
    and assume that $L_{K + w}(s) = L_{H + w}(s)$
    for all real $s > 0$ and all integer $w$.
    Then $K = H$.
\end{restatetheorem}

\begin{proof}
    By Lemma~\ref{thm:volume-of-pseudopyramid},
    we have $\vol \ppyr (K + w) = \vol \ppyr (H + w)$
    for all integer $w$.

    If $w$ is a nonzero integer vector,
    let $v = \frac{w}{\norm w}$;
    then,
    by Lemma~\ref{thm:volume-projection-from-volume-ppyr},
    we have
    \begin{align*}
        \frac{V_K(v)}{d}
            &= \lim_{\mu \to \infty} \frac{\vol \ppyr(K + \mu v)}{\mu} \\
            &= \lim_{\mu \to \infty} \frac{\vol \ppyr(H + \mu v)}{\mu} \\
            &= \frac{V_H(v)}{d}
    \end{align*}

    This shows that,
    whenever $v$ is a multiple of a rational vector,
    we have $V_K(v) = V_H(v)$.
    Since the functiosn $V_K$ and $V_H$ are continuous,
    we have $V_K = V_H$,
    and thus by Aleksandrov's projection theorem
    we conclude that $K = H$.
\end{proof}

\section{Final remarks}

Theorems \ref{thm:rational-reconstruction} and~\ref{thm:convex-body-translation-variant}
both assume that $L_{K + w}(s) = L_{H + w}(s)$
for all integer $w$ and real $s > 0$,
and both conclude that $K = H$.
The first theorem assume that the objects being considered are rational polytopes,
but no symmetry condition is imposed;
the second theorem assumes that the objects are symmetric,
but otherwise permits arbitrary convex bodies.

This suggest the following common generalization of these theorems:

\begin{conjecture}
    Let $K$ and $H$ be any convex bodies,
    and assume that $L_{K + w}(s) = L_{H + w}(s)$
    for all real $s > 0$ and all integer $w$.
    Then $K = H$.
\end{conjecture}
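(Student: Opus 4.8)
The plan is to reduce everything to Aleksandrov's projection theorem by showing that $V_K = V_H$ as functions on the unit sphere. The bridge between the Ehrhart data and the projection function $V_K$ is provided by the pseudopyramid machinery already assembled. First I would apply Lemma~\ref{thm:volume-of-pseudopyramid} to each integer translation: since $L_{K+w}(s) = L_{H+w}(s)$ for all $s > 0$, we immediately obtain
\begin{equation*}
    \vol \ppyr(K + w) = \vol \ppyr(H + w) \qquad \text{for every integer vector } w.
\end{equation*}
This is the only place where the Ehrhart hypothesis is used; from here on the argument is purely geometric.

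Next I would feed these equalities into Lemma~\ref{thm:volume-projection-from-volume-ppyr}, which computes $V_K(v)/d$ as the limit of $\vol \ppyr(K + \mu v)/\mu$ as $\mu \to \infty$. The subtlety---and what I expect to be the main obstacle---is that this limit runs over a \emph{continuous} parameter $\mu$, whereas our volume equalities hold only at \emph{integer} vectors. To reconcile the two, fix a nonzero integer vector $w$ and set $v = w/\norm w$. Restricting to the subsequence $\mu = n \norm w$ with $n = 1, 2, \dots$ makes the translation vector integral, since $\mu v = n w$. Because Lemma~\ref{thm:volume-projection-from-volume-ppyr} guarantees the full limit exists, it equals the limit along this subsequence, so
\begin{align*}
    \frac{V_K(v)}{d}
        &= \lim_{n \to \infty} \frac{\vol \ppyr(K + n w)}{n \norm w} \\
        &= \lim_{n \to \infty} \frac{\vol \ppyr(H + n w)}{n \norm w}
         = \frac{V_H(v)}{d},
\end{align*}
where the middle equality uses that $n w$ is an integer vector. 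Hence $V_K(v) = V_H(v)$ for every direction $v$ that is a unit multiple of an integer vector.

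Finally I would promote this to all directions by a density argument: the set of unit vectors of the form $w/\norm w$, with $w$ a nonzero integer (equivalently rational) vector, is dense in the unit sphere, and both $V_K$ and $V_H$ are continuous. Therefore $V_K = V_H$ everywhere, and Aleksandrov's projection theorem yields $K = H$. Beyond the continuous-versus-discrete mismatch resolved above, the only point requiring care is the continuity of $V_K$ on the sphere; this is standard, since the projection area of a fixed convex body varies continuously with the projection direction, so no essential difficulty remains once the subsequence trick is in place.
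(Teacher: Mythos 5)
There is a genuine gap, and it sits exactly where the conjecture differs from what the paper actually proves. The statement you are addressing is the paper's closing \emph{Conjecture}, in which the symmetry hypothesis has been deliberately dropped: $K$ and $H$ are arbitrary convex bodies. Your argument collapses at the final step, because Aleksandrov's projection theorem requires both bodies to be symmetric ($x \in K$ if and only if $-x \in K$). For general convex bodies, $V_K = V_H$ does \emph{not} imply $K = H$: the brightness function $V_K(u) = \frac12 \int_{S^{d-1}} |u \cdot v| \, dS_K(v)$ depends only on the even part of the surface area measure of $K$, so it cannot distinguish a non-symmetric body from its Blaschke body (the symmetric body whose surface area measure is the even part of $S_K$), and, more strikingly, Blaschke constructed non-spherical convex bodies of constant brightness in $\mathbb R^3$, which share their brightness function with a ball yet are not balls. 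So the quantity you extract from the Ehrhart data---the function $V_K$---is genuinely insufficient information to reconstruct a non-symmetric body; no refinement of the limiting argument can repair this, because the loss happens at the level of what is being computed, not how the limit is taken.

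What your argument does establish is precisely the paper's Theorem~\ref{thm:convex-body-translation-variant}, the symmetric case, and it does so by the same route the paper takes: Lemma~\ref{thm:volume-of-pseudopyramid} turns the Ehrhart hypothesis into $\vol \ppyr(K+w) = \vol \ppyr(H+w)$ for integer $w$, Lemma~\ref{thm:volume-projection-from-volume-ppyr} turns these into $V_K(v) = V_H(v)$ for rational directions $v$, and continuity plus density finishes. (Your subsequence $\mu = n \norm{w}$ observation is a worthwhile clarification---the paper's own proof silently uses that the volume equality is only available when $\mu v$ is an integer vector---but it is a refinement of the same proof, not a new idea.) The conjecture is open precisely because one must exploit strictly more of the Ehrhart data than the pseudopyramid volumes: any proof that funnels all the information through $V_K$ and $V_H$ is ruled out by the counterexamples above.
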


\bibliographystyle{plainurl}
\bibliography{bib}

\end{document}